\documentclass[11pt]{amsart}
\usepackage[shortalphabetic]{amsrefs}
\usepackage[utf8]{inputenc}
\usepackage{amsthm}
\usepackage[utf8]{inputenc}
\usepackage{appendix}
\usepackage[top= 2.5cm, bottom=2.5cm, left=2.5cm, right=2.5cm]{geometry}
\usepackage{cite}
\usepackage{amssymb,amsmath,amsthm}
\usepackage{hyperref}




\newcommand{\amsprimary}[1]{{\footnotesize\noindent AMS 2020 \textit{Mathematics subject
classification:} Primary #1\vspace{1pc}}}
\newcommand{\keywordsnames}[1]{{\footnotesize\noindent\textit{Key words:} #1\vspace{1pc}}}

\newtheorem{theorem}{Theorem}

\newtheorem{teo}{Theorem}

\newtheorem{corollary}[teo]{Corollary}
\newtheorem{lemma}[teo]{Lemma}

\theoremstyle{definition}

\title[]{Liouville theorem for biharmonic functions on manifolds of nonnegative Ricci curvature}
\author{John E. Bravo and Jean C. Cortissoz }
\email{j.bravob@uniandes.edu.co, jcortiss@uniandes.edu.co}
\address{Department of Mathematics, Universidad de los Andes, Bogot\'a DC, Colombia}
\date{}

\begin{document}

\begin{abstract}
In this paper we extend Yau's celebrated Liouville theorem to the biharmonic case. Namely, we show that
in a complete Riemannian manifold with a pole and nonnegative Ricci curvature, any biharmonic function
of subquadratic growth must be harmonic, and hence, any biharmonic function of sublinear growth must be constant.
Our proof relies on a new local $L^2$ estimate for the Laplacian of biharmonic functions combined with a mean value inequality. Examples
where our theorem applies include hypersurfaces 
of positive sectional curvature in $\mathbb{R}^n$, and manifolds with a pole of nonnegative Ricci curvature
whose curvature decays at infinity rapidly enough.
\end{abstract}

\maketitle

\keywordsnames{Liouville theorem; bounded biharmonic functions; subquadratic growth.}

{\amsprimary {31C05, 53C21, 35B53}}

\tableofcontents

\section{Introduction}


The Liouville theorem, a landmark result of classical analysis, asserts that any bounded harmonic function on $\mathbb{R}^n$ is constant. In 1975, Yau \cite{Yau75} proved that this theorem continues to hold for complete Riemannian manifolds with nonnegative Ricci curvature, that is, every bounded harmonic function must be constant. Since then, the study of Liouville-type properties has been a central theme in geometric analysis, with deep connections to potential theory, volume growth, and curvature conditions (see Li–Schoen \cite{LiSchoen84}, Li–Tam \cite{LiTam1989}, Colding-Minicozzi 
\cite{ColdingMinicozzi97}, among many others).

\medskip
The case of higher-order elliptic equations, however, is considerably less developed. In Euclidean space it has long been known that bounded polyharmonic functions must be constant (see Aronszajn–Creese–Lipkin \cite{ACL83}). In fact, explicit representations show that entire biharmonic functions with subquadratic growth reduce to harmonic polynomials of degree at most one, and hence are harmonic. These Euclidean results suggest that the natural growth threshold for biharmonic functions is quadratic, in analogy with the linear growth threshold for harmonic functions.

\medskip
On Riemannian manifolds, of course, the situation is subtler. While the study of polyharmonic operators arises naturally in conformal geometry, most notably through the Paneitz operator,
general Liouville theorems for biharmonic functions under Ricci curvature conditions appear to be largely absent from the literature.
As notable exceptions we include the works Branding \cite{Branding18} where Liouville-type theorems 
for biharmonic maps between manifolds are proved under smallness conditions on different
Sobolev norms.
In addition, Maz’ya and Mayboroda \cite{MazyaMayboroda06} obtained fundamental $L^2$ estimates for solutions of biharmonic Poisson problems in Euclidean domains, which inspired our search for analogous inequalities in the Riemannian setting. Also, previous work by the authors \cite{BravoCortissoz}
extends Milnor's criterion for parabolicity of surfaces to biharmonic functions, and a curvature regime where any bounded biharmonic function must be harmonic is identified. We believe that the study 
of Liouville-type properties and its relation with geometric invariants for
higher order elliptic operators is a fundamental question in Geometric Analysis.

\medskip
The goal of this paper is to provide such a result. We extend Yau’s theorem to biharmonic functions of subquadratic growth on certain complete Riemannian manifolds with nonnegative Ricci curvature. By subquadratic growth we mean that
a function $u:M\longrightarrow \mathbb{R}$ satisfies
\[
\lim_{r\rightarrow\infty}\frac{\sup_{x\in B_r\left(p\right)}\left|u\left(x\right)\right|}{r^2}=0,
\]
where $B_r\left(p\right)$ is the ball of radius $r>0$ centered at $p$.

\medskip
To be more precise, we shall consider manifolds with a pole, that is
manifolds $M$ for which there is a point $p\in M$ such that
the exponential map
\[
\mbox{exp}_p:\, T_pM \longrightarrow M
\]
is a diffeomorphism. 
Then we have the following.

\begin{theorem}
\label{thm:main}
Let $(M,g)$ be a complete Riemannian manifold with 
$\mbox{Ric}\geq 0$. Then any biharmonic function $u \in C^4(M)$ of subquadratic growth must be harmonic. In particular, any biharmonic function of sublinear growth must be constant.
\end{theorem}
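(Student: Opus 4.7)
The plan is to set $v := \Delta u$ and show that $v \equiv 0$ on $M$. Since $\Delta v = \Delta^2 u = 0$, the function $v$ is harmonic; in particular, $v^2$ is subharmonic because $\Delta(v^2) = 2|\nabla v|^2 \geq 0$. The argument has three ingredients, matching the abstract: (1) a local Caccioppoli-type $L^2$ estimate
\[
\int_{B_R(x)} |\Delta u|^2 \, dV \;\leq\; \frac{C}{R^4} \int_{B_{2R}(x)} u^2 \, dV,
\]
(2) the standard mean value inequality for nonnegative subharmonic functions on manifolds with $\mathrm{Ric}\geq 0$, and (3) Bishop--Gromov's relative volume comparison. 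A brief appeal to the Cheng--Yau gradient estimate then handles the sublinear case.

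For (1), fix a cutoff $\eta$ supported in $B_{2R}(x)$ with $\eta = 1$ on $B_R(x)$ and $|\nabla\eta| + R|\Delta\eta| \leq C$. Starting from $\int \eta^2 v^2 = \int \eta^2 v \, \Delta u$ and integrating by parts twice (using $\Delta v = 0$), one generates boundary-free terms involving $u$, $\nabla u$, and derivatives of $\eta$. Cauchy--Schwarz absorbs small multiples of $\int \eta^2 v^2$ back into the left-hand side, while the intermediate $\int \eta^2 |\nabla u|^2$ terms are controlled by a companion estimate obtained by testing $v = \Delta u$ against $\eta^2 u$, yielding
\[
\int \eta^2 |\nabla u|^2 \;\leq\; \varepsilon \int \eta^2 v^2 + C \int u^2 \bigl(\eta^2 + |\nabla \eta|^2\bigr).
\]
Coupling the two bounds and iterating over a telescoping sequence of nested cutoffs at scales $\rho_k \in [R, 2R]$ closes the feedback between $v$ and $|\nabla u|$ and yields the desired estimate with no a priori integrability assumption on $v$.

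For (2)--(3), the Li--Schoen mean value inequality for subharmonic functions on manifolds with $\mathrm{Ric}\geq 0$ gives
\[
v(x)^2 \;\leq\; \frac{C_n}{\mathrm{Vol}(B_R(x))} \int_{B_R(x)} v^2.
\]
Combining this with (1), the subquadratic bound $\sup_{B_r(p)}|u| \leq \varepsilon(r)\, r^2$ (where $\varepsilon(r)\to 0$), and the inclusion $B_{2R}(x)\subset B_{3R}(p)$ valid for $R \geq d(x,p)$, I obtain
\[
v(x)^2 \;\leq\; \frac{C\,\varepsilon(3R)^2 \,\mathrm{Vol}(B_{2R}(x))}{\mathrm{Vol}(B_R(x))} \;\leq\; C_n\, \varepsilon(3R)^2,
\]
where the last step uses Bishop--Gromov's bound $\mathrm{Vol}(B_{2R}(x))/\mathrm{Vol}(B_R(x)) \leq 2^n$. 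Letting $R \to \infty$ forces $v(x) = 0$, so $u$ is harmonic. The sublinear statement then follows from the Cheng--Yau gradient estimate $|\nabla u|(x) \leq (C/R)\sup_{B_R(x)}|u|$ by letting $R \to \infty$.

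The main obstacle is step (1). Since $u$ itself has no sign and satisfies no useful maximum principle, the Caccioppoli computation must carefully balance energy terms involving $u$, $\nabla u$, and $v$ without producing right-hand-side terms that cannot be absorbed. Ensuring that the feedback between the $|\nabla u|^2$ and $v^2$ estimates closes cleanly on the scale $R$ --- rather than requiring nested balls whose radii blow up --- is the genuinely new ingredient beyond the classical harmonic Caccioppoli inequality and is what makes the $C/R^4$ decay, as opposed to a weaker decay, achievable.
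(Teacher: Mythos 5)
Your high-level strategy is the same as the paper's: establish a Caccioppoli-type $L^2$ estimate of the form $\int_{B_R}|\Delta u|^2 \le CR^{-4}\int_{B_{2R}}u^2$, combine it with a mean value inequality for the harmonic function $\Delta u$ and Bishop--Gromov, then let $R\to\infty$ to force $\Delta u\equiv 0$. Where you genuinely diverge from the paper is in how the Caccioppoli estimate is produced. The paper proceeds in two separate lemmas: Lemma~\ref{lem:laplacian_estimate} applies Bochner's formula together with the bound $|\nabla^2 u|^2\ge (\Delta u)^2/n$ and a Caccioppoli inequality for the harmonic function $\Delta u$, then removes a residual $\tfrac12\int|\Delta u|^2$ term by hole-filling; Lemma~\ref{lem:Caccio-1-biharmonic-Br} then closes the loop entirely within one fixed cutoff by allowing the Young's inequality parameter $\varepsilon$ to depend on the cutoff function itself (taking $\varepsilon = \varphi(r-s)/2L$ with the special $\varphi=\chi^4$ of Corollary~\ref{corollary:special_cutoff}), so that the dangerous $\int|\nabla\psi|^2|\nabla u|^2$ term becomes a small multiple of $\int\varphi^2|\nabla u|^2$ and absorbs directly without iteration. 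You instead avoid Bochner altogether: double integration by parts against $\eta^2\Delta u$, Caccioppoli for the harmonic $v$, and a companion test of $\Delta u$ against $\eta^2 u$ give two inequalities with a feedback loop between $\int\eta^2 v^2$ and $\int\eta^2|\nabla u|^2$, which you close by a telescoping hole-filling iteration. This is more elementary (no curvature term enters the energy estimate; Ricci~$\ge 0$ is used only for the cutoff bound, the mean value inequality, and volume comparison), at the cost of a somewhat more delicate multi-scale iteration than the paper's single cutoff-dependent absorption. Two small inaccuracies to note: (i) in your companion estimate the constant in front of $\int u^2\eta^2$ is really $C/\varepsilon$, not an absolute $C$ — this is harmless because when you later plug in $\varepsilon\sim(r-s)^2$ it produces exactly the $(r-s)^{-4}$ scaling you want, but as written it obscures the bookkeeping; (ii) the paper's statement of Theorem~\ref{thm:main} omits the pole hypothesis that its proof (via Lemma~\ref{lem:cutoff}) actually requires, and your argument inherits the same issue through the $|\Delta\eta|$ bound on the cutoff, so if you want the result for all complete manifolds with $\mathrm{Ric}\ge 0$ you would need to replace the smooth cutoff with one built from the (Lipschitz) distance function and interpret $\Delta\eta$ in the barrier/distributional sense. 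Your use of the Cheng--Yau gradient estimate for the sublinear case is a clean alternative to citing Yau's theorem and is fine.
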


\medskip
Our approach combines three key ingredients: (i) a local energy inequality for biharmonic functions, (ii) a hole-filling iteration lemma with singular factors, and (iii) a mean value inequality applied to the Laplacian of $u$. A central step is an $L^2$-estimate for $\Delta u$, which may be viewed as the biharmonic analogue of the classical Caccioppoli inequality
(see \eqref{ineq:Cacciopoli_laplacian}). This estimate is reminiscent of Maz’ya–Mayboroda’s work in the Euclidean case, but adapted here to the setting of manifolds with Ricci curvature bounded below. Once this estimate is in place, we show that subquadratic growth forces $\Delta u \equiv 0$, thereby reducing the problem to Yau’s theorem. We must observe that the proof of our Caccioppoli-type inequality \eqref{ineq:Cacciopoli_laplacian} is somewhat delicate, as the proof of Lemma \ref{lem:Caccio-1-biharmonic-Br} depends on a cut-off dependent
application of Young's inequality.

\medskip
In some sense Theorem \ref{thm:main} is sharp: in $\mathbb{R}^n$ any biharmonic function of subquadratic growth is in fact harmonic of linear growth, showing that the quadratic threshold cannot be weakened. Our theorem thus provides a natural extension of Liouville property to biharmonic functions in manifolds of nonnegative Ricci curvature, which is a natural higher-order extension of Yau’s Liouville theorem, and a foundational step toward $k$-polyharmonic analogues.

\medskip
\noindent\textbf{Organization of the paper.} The material in this paper is presented as follows. In Section \ref{section:technical} we collect some technical facts (which can be safely said that are 
 standard tools by now), in Section \ref{sect:Cacciopoli} we introduce
 some local energy estimates and Caccioppoli-type inequalities for
 biharmonic functions, and in Section \ref{sect:proof_main_thm} we prove our main result.


\section{Proof of the Main Result}

\subsection{Some Technical Lemmas}
\label{section:technical}

In this section, we collect some technical lemmas that will be used in the proof of our main result.

\medskip
We start with a classical cut-off construction, which is 
the key in the proof of our main theorem. But first, we need to make
an observation. If $(M,g)$ has a pole and $\mbox{Ric}_g\geq 0$ then $\Delta d\geq 0$,
where $r$ is the distance function from the pole. The reason is that,
along a geodesic $\gamma:\left[0,\infty\right)\longrightarrow M$ starting at the pole, the Laplacian of the
distance function from the pole $S(t)=(\Delta d)(\gamma(t))$ satisfies the Ricatti inequality
\[
S'\leq -\frac{S^2}{n-1}-\mbox{Ric}_g(\dot{\gamma},\dot{\gamma}),
\]
and hence 
\[
S'\leq -\frac{S^2}{n-1},
\]
so that if $S$ becomes negative at some point, then there would be a $t_0>0$ such that
\[
\lim_{t\rightarrow t_0^{+}}S(t)=-\infty,
\]
contradicting that the geodesic starts at a pole.

\begin{lemma}[Cutoff with explicit bounds]\label{lem:cutoff}
Let $(M^n,g)$ be a complete Riemannian manifold with 
a pole $p\in M$. Let $r$ be such that $0<r<R$.
Set $B_\rho:=B_\rho(p)$.
Assume $\mbox{Ric}_g\ge -(n-1)K\,g$ on $B_R$ for some $K\ge 0$.
Then there exists $\chi\in C_c^\infty(B_R)$ such that
\[
0\le \chi\le 1,\qquad
\chi\equiv 1\ \text{on } B_r,\qquad
\mbox{supp}\left(\chi\right)\subset B_R,
\]
and the derivative bounds, if $K>0$
\begin{equation*}
|\nabla \chi|\ \le\ \frac{C(n)}{R-r}, \quad \mbox{and}
\end{equation*}
\begin{equation*}
|\Delta \chi|\ \le\ C(n)\!\left(\frac{1}{(R-r)^2}+\frac{\max\left\{
\sqrt{K}\coth \sqrt{K},1\right\}}{R-r}\right)
\quad\text{on}\quad B_R ,
\end{equation*}
and if $K=0$
then 
\[
|\Delta\chi|\le C(n)\left((R-r)^{-2}+\frac{1}{(R-r)r}\right)
\quad \text{on} \quad B_R.
\]
Moreover, if $r\ge\frac{1}{8}R$, then
\[
|\Delta\chi|\le \frac{C(n)}{(R-r)^{2}}
\quad \text{on}\quad B_R.
\]
\end{lemma}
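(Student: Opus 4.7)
The natural construction is radial: define $\chi(x) := \phi(d(x))$ where $d$ is the distance function from the pole $p$ and $\phi \in C^\infty(\mathbb{R})$ is a one-variable cutoff with $\phi \equiv 1$ on $(-\infty,r]$, $\phi \equiv 0$ on $[R,\infty)$, $0 \le \phi \le 1$, $|\phi'| \le C(R-r)^{-1}$ and $|\phi''| \le C(R-r)^{-2}$. Since $p$ is a pole, $d$ is smooth on $M\setminus\{p\}$, and because $\chi \equiv 1$ on the neighborhood $B_r$ of $p$, the singularity of $d$ at $p$ is invisible to $\chi$, so $\chi \in C^\infty_c(B_R)$.

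The chain rule, together with $|\nabla d|=1$ a.e., gives
\[
\nabla \chi = \phi'(d)\,\nabla d,\qquad |\nabla \chi| = |\phi'(d)| \le \frac{C}{R-r},
\]
and
\[
\Delta \chi \;=\; \phi''(d)\,|\nabla d|^{2} + \phi'(d)\,\Delta d \;=\; \phi''(d) + \phi'(d)\,\Delta d.
\]
The first term is controlled by $C(R-r)^{-2}$, so the real work is to bound $\phi'(d)\,\Delta d$ on the annulus $A := \{r \le d \le R\}$ where $\phi'$ is supported. The key input is the Laplacian comparison theorem: under $\mathrm{Ric}_g \ge -(n-1)K\,g$ one has $\Delta d \le (n-1)\sqrt{K}\coth(\sqrt{K}\,d)$, understood as $(n-1)/d$ in the limit $K=0$. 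Combined with the observation preceding the lemma that the pole hypothesis forces $\Delta d \ge 0$, this yields pointwise two-sided control of $\Delta d$ on $A$.

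For $K=0$, on $A$ we have $\Delta d \le (n-1)/d \le (n-1)/r$, so $|\phi'(d)\Delta d| \le C(n)/((R-r)r)$, which gives the stated bound. For $K>0$, since $t\mapsto \sqrt{K}\coth(\sqrt{K}\,t)$ is decreasing, elementary estimates on $\coth$ (splitting into $\sqrt{K}d \le 1$ and $\sqrt{K}d \ge 1$) bound this quantity by $C\max\{\sqrt{K}\coth\sqrt{K},\,1\}$ on $A$, yielding the claimed inequality. Finally, the ``moreover'' collapse follows from pure arithmetic: if $r \ge R/8$ then $R-r \le 7r$, so $(R-r)^{-1}r^{-1} \le 7(R-r)^{-2}$, and the two terms in the $K=0$ bound fold into a single $C(n)(R-r)^{-2}$. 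The construction is classical; the only point requiring some attention is choosing the right monotonicity estimate for $\coth$ to produce the claimed form of the constant in the $K>0$ case, which I expect to be the main (minor) obstacle.
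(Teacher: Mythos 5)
Your proposal is correct and follows essentially the same route as the paper: a radial cutoff $\chi=\phi(d)$, the chain rule for $\nabla\chi$ and $\Delta\chi$, the Laplacian comparison $\Delta d\le (n-1)f_K(d)$ together with the pole-forced lower bound $\Delta d\ge 0$, and the observation that $\phi'$ is supported on the annulus $\{r\le d\le R\}$ so that $\Delta d\le (n-1)/r$ there when $K=0$; the ``moreover'' case then follows from the arithmetic $r\ge R/8\Rightarrow R-r\le 7r$. The only soft spot you flag, the $K>0$ estimate producing $\max\{\sqrt{K}\coth\sqrt{K},1\}$, is equally soft in the paper's own proof (it implicitly needs $r\gtrsim 1$), but since the main theorem only uses $K=0$ this is cosmetic for both.
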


\begin{proof}
Let $p$ be a pole of $M$. Let $d(x):=d(x,p)$ be the distance function
to $p$. Fix a smooth cut off function
$\eta\in C^\infty(\mathbb{R})$ such that $0\le \eta\le 1$,
\[
\eta(t)=1\ \,\,\text{for }\,\, t\le 0,\qquad
\eta(t)=0\ \,\,\text{for }\,\, t\ge 1,\qquad
|\eta'|\le C_0,\ \ |\eta''|\le C_0 .
\]
Define the cutoff
\[
\chi(x):=\eta\!\left(\frac{d(x)-r}{R-r}\right).
\]
Then $0\le\chi\le 1$, $\chi\equiv 1$ on $B_r$, and $\mbox{supp}\left(\chi\right)\subset B_R$.
Since $p$ is a pole 
of $M$, $d$ is smooth away from $p$ and
$\nabla d$ exists with $|\nabla d|=1$. Hence,
$\chi$ is smooth and by the chain rule
\[
|\nabla\chi|\le \frac{C_0}{R-r}.
\]

Now, we focus on $\Delta \chi$.
We compute 
\[
\Delta \chi = \eta''\left(\frac{d-r}{R-r}\right)
\left<\nabla d, \nabla d\right>+\eta'\left(\frac{d-r}{R-r}\right)
\Delta d.
\]
By the Laplacian comparison under the curvature
inequality $\mbox{Ric}\ge -(n-1)K$,
\(
\Delta d \le (n-1)f_K(d),
\)
with the convention that 
$f_K(d)=\sqrt{K}\coth(\sqrt{K}\,d)$ if $K>0$ and
$f_K(d)=1/d$ if $K=0$. Thus,
\[
0\leq \Delta d \leq (n-1)f_K(d),
\]
and hence,
\[
|\Delta \chi|
\ \le\ \frac{C_0}{(R-r)^2}
+ \frac{C_0}{R-r}\,|\Delta d|
\ \le\ C(n)\!\left(\frac{1}{(R-r)^2}+\frac{\max\left\{f_K(d),1/d\right\}}{R-r}\right).
\]
If $K>0$, then 
\[
|\Delta \chi|
\ \le\ \ C(n)\!\left(\frac{1}{(R-r)^2}+\frac{\max\left\{
\sqrt{K}\coth \sqrt{K},1\right\}}{R-r}\right),
\]
that is
\[
|\Delta \chi|
\ \le\ C(n,K)\!\left(\frac{1}{(R-r)^2}+\frac{1}{R-r}\right),
\]
and if $K=0$, since only in the annulus 
$\eta'\neq 0$,
\[
|\Delta \chi|
\ \le\ C(n)\!\left(\frac{1}{(R-r)^2}+\frac{1}{(R-r)r}\right).
\]
Finally, if $r\ge\frac{1}{8}R$ then $c(R-r)< r$, we obtain that 
\[
|\Delta \chi|
\ \le \frac{C(n)}{(R-r)^2}.
\]
\end{proof}

From the previous lemma we obtain the existence of the following special type of
cutoff function.
\begin{corollary}
\label{corollary:special_cutoff}
Let $(M^n,g)$ be a complete Riemannian manifold with 
a pole $p\in M$.  Set $B_\rho:=B_\rho(p)$, assume $\mbox{Ric}_g\ge 0$ on $B_R$,
 and let $0<r<R$.
Then there exists $\varphi\in C_c^\infty(B_r)$ such that $0\leq \varphi\leq 1$,
$\varphi=1$ on $B_s$, and for $\dfrac{1}{8}r<s<r<R$,
    \[
    \left|\nabla \varphi\right|^2\leq \frac{C}{\left(r-s\right)^2}\varphi\quad
    \text{ and } \quad
    \frac{\left|\Delta\left(\varphi^4\right)\right|^2}{\varphi^4}\leq \frac{C}{\left(r-s\right)^4}.
    \]
\end{corollary}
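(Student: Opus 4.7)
The strategy is to obtain $\varphi$ as a power of the standard cutoff $\chi$ produced by Lemma \ref{lem:cutoff}, using the inner radius $s$ and the outer radius $r$. Since the hypothesis $s>\frac{1}{8}r$ is exactly the condition $r\ge\frac{1}{8}R$ of Lemma \ref{lem:cutoff} (after the obvious renaming), and since $\mbox{Ric}_g\ge 0$ allows $K=0$, that lemma furnishes $\chi\in C_c^\infty(B_r)$ with $0\le\chi\le 1$, $\chi\equiv 1$ on $B_s$, and
\[
|\nabla\chi|\le \frac{C(n)}{r-s},\qquad |\Delta\chi|\le \frac{C(n)}{(r-s)^{2}}.
\]
I then set $\varphi:=\chi^{2}$. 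By construction $\varphi\in C_c^\infty(B_r)$, $0\le\varphi\le 1$, and $\varphi\equiv 1$ on $B_s$, so the only thing left to verify is the two pointwise inequalities.

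For the gradient inequality, the chain rule gives $\nabla\varphi=2\chi\nabla\chi$, hence
\[
|\nabla\varphi|^{2}=4\chi^{2}|\nabla\chi|^{2}=4\varphi\,|\nabla\chi|^{2}\le \frac{C(n)}{(r-s)^{2}}\,\varphi,
\]
which is the first claimed bound. The essential point of using $\varphi=\chi^{2}$ (rather than $\chi$ itself) is precisely that this squeezes one factor of $\chi^{2}=\varphi$ out of $|\nabla\chi|^{2}$, producing the required degenerate right-hand side.

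For the Laplacian inequality, I compute $\varphi^{4}=\chi^{8}$ and apply the chain rule twice:
\[
\Delta(\chi^{8})=8\chi^{7}\Delta\chi+56\chi^{6}|\nabla\chi|^{2}.
\]
Squaring and using the elementary inequality $(a+b)^{2}\le 2a^{2}+2b^{2}$,
\[
\frac{|\Delta(\varphi^{4})|^{2}}{\varphi^{4}}=\frac{|\Delta(\chi^{8})|^{2}}{\chi^{8}}\le 128\,\chi^{6}|\Delta\chi|^{2}+6272\,\chi^{4}|\nabla\chi|^{4}\le \frac{C(n)}{(r-s)^{4}},
\]
where in the last step I use $0\le\chi\le 1$ together with the bounds on $|\nabla\chi|$ and $|\Delta\chi|$ above. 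This yields the second inequality.

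I do not anticipate a real obstacle: the only delicate point is making sure that the power of $\chi$ we choose is high enough that every surviving factor of $\chi$ from the derivatives can be absorbed into $\varphi$ (for the first estimate) or into $\varphi^{4}$ (for the second). Taking $\varphi=\chi^{2}$ is the minimal choice that works for both estimates simultaneously, and the restriction $s>\frac{1}{8}r$ is exactly what allows us to dispense with the extra $1/((r-s)r)$ term in $|\Delta\chi|$ coming from the $K=0$ case of the Laplacian comparison.
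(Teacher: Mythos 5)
Your proof is correct and follows essentially the same strategy as the paper: take the cutoff $\chi$ from Lemma \ref{lem:cutoff} (in the $K=0$, $s>\tfrac{1}{8}r$ regime, so $|\Delta\chi|\le C(n)/(r-s)^2$) and raise it to a power so that the surviving factors of $\chi$ in $|\nabla\varphi|^2$ and $|\Delta(\varphi^4)|^2$ can be absorbed into $\varphi$ and $\varphi^4$, respectively. The only difference is cosmetic: the paper sets $\varphi=\chi^4$, whereas you set $\varphi=\chi^2$, which, as you note, is the minimal exponent making both bounds work; this is a slightly leaner choice but not a different argument.
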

\begin{proof}
    Take $\varphi=\chi^4$, with $\chi$
    as in Lemma \ref{lem:cutoff} . First we check: ha
    \begin{eqnarray*}
        \left|\nabla \varphi\right|^2&=& 16\left|\nabla \chi\right|^2 \chi^6\\
        &\leq & \frac{C}{(r-s)^2}\chi^4 = \frac{C}{(r-s)^2}\varphi.
    \end{eqnarray*}
    On the other hand,
    \begin{eqnarray*}
        \left|\Delta\left(\varphi^4\right)\right|^2&=& \left(4\varphi^3\Delta \varphi+12\phi^2\left|\nabla \varphi\right|\right)^2.
    \end{eqnarray*}
    Since $0\leq \chi\leq 1$ and from
    the bounds on $\nabla \chi$ and $\Delta \chi$, we can estimate
    \[
    \left|\Delta \chi^4\right|=\left|4\chi^3\Delta\chi+12\chi^2\left|\nabla \chi\right|^2\right|\leq 
    \frac{C}{\left(r-s\right)^2}.
    \]
    Then,
    \[
    4\left|\varphi^3\Delta \varphi\right|\leq \frac{C}{\left(r-s\right)^2} \varphi^2.
    \]
    Because
    \[
    \Delta \left(\varphi^4\right)=4\varphi^3\Delta\varphi+12\varphi^2\left|\nabla \varphi\right|^2,
    \]
    using all the previous estimates we obtain
    \[
    \left|\Delta\left(\varphi^4\right)\right|^2 \leq \left(\frac{C}{\left(r-s\right)^2} \varphi^2\right)^2
    \leq \frac{C}{\left(r-s\right)^4}\varphi^4,
    \]
    which finishes the proof of the corollary.
\end{proof}

\begin{lemma}[Hole-filling iteration]
\label{lemma:hole-filling}
Let $F,G:[0,R]\to[0,\infty)$ be bounded functions, $G$ nondecreasing, $\alpha >0$, and assume that there exists 
$\theta\in[0,1)$ such that for all $0<s<r\leq R$ one has
\begin{equation}\label{ineq:hole-filling}
    F(s) \;\leq\; \theta\,F(r) \;+\; \frac{1}{(r-s)^{\alpha}}\,G(r).
\end{equation}
Then for any $\lambda\in(\theta^{1/\alpha},1)$,
\begin{equation}\label{ineq:iteration}
    F(s) \;\leq\; 
    \frac{(1-\lambda)^{-\alpha}}{1-\theta\lambda^{-\alpha}}\,
    \frac{G\left(r\right)}{(r-s)^{\alpha}}, 
    \qquad 0<s<r\leq R.
\end{equation}
\end{lemma}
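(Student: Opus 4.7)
The plan is to iterate the hypothesis \eqref{ineq:hole-filling} along a geometric sequence of radii bridging $s$ and $r$. Fix $0<s<r\le R$ and $\lambda\in(\theta^{1/\alpha},1)$, and define $r_0:=s$ together with the recursion
\[
r_{k+1}:=r_k+(1-\lambda)\lambda^{k}(r-s),\qquad k\ge 0.
\]
Since $\sum_{k\ge 0}(1-\lambda)\lambda^{k}=1$, the sequence $(r_k)$ is strictly increasing, lies in $[s,r]$, and satisfies $r_k\nearrow r$. The key geometric gap is $r_{k+1}-r_k=(1-\lambda)\lambda^{k}(r-s)$, so $(r_{k+1}-r_k)^{\alpha}=(1-\lambda)^{\alpha}\lambda^{k\alpha}(r-s)^{\alpha}$.

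Next I would apply \eqref{ineq:hole-filling} at each consecutive pair $(r_k,r_{k+1})$, obtaining
\[
F(r_k)\le \theta F(r_{k+1})+\frac{G(r_{k+1})}{(1-\lambda)^{\alpha}\lambda^{k\alpha}(r-s)^{\alpha}}.
\]
Iterating this from $k=0$ to $k=N-1$ and using the monotonicity of $G$ together with $r_{k+1}\le r$, one arrives at
\[
F(s)\le \theta^{N}F(r_{N})+\frac{G(r)}{(1-\lambda)^{\alpha}(r-s)^{\alpha}}\sum_{k=0}^{N-1}\bigl(\theta\lambda^{-\alpha}\bigr)^{k}.
\]
The hypothesis $\lambda>\theta^{1/\alpha}$ is designed exactly so that $\theta\lambda^{-\alpha}<1$, hence the geometric series is summable and converges to $(1-\theta\lambda^{-\alpha})^{-1}$.

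Finally, I would pass to the limit $N\to\infty$. Since $F$ is bounded on $[0,R]$ and $\theta\in[0,1)$, the remainder $\theta^{N}F(r_{N})$ tends to zero, yielding \eqref{ineq:iteration}. The only subtle points are verifying that the chosen $r_k$ indeed remain in $[s,r]$ and that the summability threshold matches $\lambda>\theta^{1/\alpha}$; both are mechanical checks rather than genuine obstacles. The boundedness of $F$ is used in a single place, to discard the $\theta^{N}F(r_{N})$ term, which is why the lemma is stated under that hypothesis.
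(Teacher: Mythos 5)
Your proof is correct and follows essentially the same route as the paper: both construct the geometric sequence $r_k = s + (1-\lambda^k)(r-s)$ (you define it recursively, the paper in closed form, but it is the same sequence), apply the hypothesis on consecutive gaps, use monotonicity of $G$ to replace $G(r_{k+1})$ by $G(r)$, sum the geometric series guaranteed convergent by $\theta\lambda^{-\alpha}<1$, and discard the remainder $\theta^N F(r_N)$ using boundedness of $F$. There is no meaningful difference.
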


\begin{proof}
Fix $0<s<r\leq R$ and define the geometric sequence 
\[
r_k := s + (1-\lambda^k)(r-s), \qquad k=0,1,2,\dots.
\]
Then $r_0=s$ and $r_k\uparrow r$. 
Applying \eqref{ineq:hole-filling} with $(s,r)=(r_k,r_{k+1})$ gives
\[
F(r_k)\;\leq\;\theta\,F(r_{k+1})
+ \frac{1}{(r_{k+1}-r_k)^{\alpha}}\,G(r_{k+1}).
\]
Since $r_{k+1}-r_k=(1-\lambda)\lambda^k(r-s)$, this becomes
\[
F(r_k)\;\leq\;\theta\,F(r_{k+1})
+ (1-\lambda)^{-\alpha}\lambda^{-\alpha k}(r-s)^{-\alpha}\,G(r_{k+1}).
\]
Iterating from $k=0$ to $k=m-1$ yields
\[
F(s)=F(r_0)\;\leq\;\theta^mF(r_m)
+(1-\lambda)^{-\alpha}(r-s)^{-\alpha}
\sum_{j=0}^{m-1} (\theta\lambda^{-\alpha})^j G(r_{j+1}).
\]
Using that $G$ is nondecreasing gives that $G(r_{j+1})\leq G(r)$ for all $j$. 
Hence
\[
F(s)\;\leq\;\theta^mF(r_m)
+(1-\lambda)^{-\alpha}(r-s)^{-\alpha}G(r)\sum_{j=0}^{m-1}(\theta\lambda^{-\alpha})^j.
\]
Since $\theta\lambda^{-\alpha}<1$, the series converges as $m\to\infty$ and 
$\theta^mF(r_m)\to0$. This proves \eqref{ineq:iteration}.
\end{proof}

The following basic estimate is Theorem 6.2
in Chapter II from \cite{SchoenYau1994}.

\begin{lemma}[Mean value inequality for $u^2$]
\label{lemma:mean_value_ineq}
Let $(M^n,g)$ be a complete Riemannian manifold and fix $x_0\in M$.
Assume a lower Ricci bound $\mathrm{Ric}_g \ge -(n-1)K g$ on $B_{2a}:=B_{2a}(x_0)$ for some $K\ge0$.
If $u\ge0$ is (weakly) subharmonic on $B_{2a}$ (i.e.\ $\Delta u\ge0$ in $B_{2a}$), then
\begin{equation}\label{eq:MV-u2}
\sup_{B_a} u^{2}
\;\le\;
C(n)\,\exp\!\big(C(n)\sqrt{K}\,R\big)\,
\frac{1}{\mathrm{Vol}(B_{2a})}\int_{B_{2a}} u^{2}.
\end{equation}
In particular, when $K=0$ (nonnegative Ricci curvature on $B_{2a}$),
\begin{equation}\label{eq:MV-u2-Ric0}
\sup_{B_a} u^{2}
\;\le\;
\frac{C(n)}{\mathrm{Vol}(B_{2a})}\int_{B_{2a}} u^{2}.
\end{equation}
\end{lemma}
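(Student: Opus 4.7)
The plan is to prove the mean-value inequality by Moser iteration, following the classical strategy from Schoen--Yau. The starting observation is that whenever $u\ge 0$ and $\Delta u\ge 0$ (weakly), for every $p\ge 1$ the power $v:=u^p$ is again non-negative and weakly subharmonic, since formally
\[
\Delta(u^p)=p\,u^{p-1}\Delta u+p(p-1)\,u^{p-2}|\nabla u|^2\ge 0.
\]
This permits the iteration of a single reverse-H\"older inequality on a shrinking family of concentric balls.

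First I would establish a Caccioppoli estimate: for a cutoff $\varphi\in C_c^\infty(B_{2a})$, testing $\Delta v\ge 0$ against $\varphi^2 v$, integrating by parts, and applying Young's inequality produces
\[
\int_{B_{2a}}\varphi^{2}|\nabla v|^{2}\;\le\;4\int_{B_{2a}}|\nabla\varphi|^{2} v^{2}.
\]
Next I would invoke Saloff-Coste's Sobolev inequality on geodesic balls under $\mathrm{Ric}_g\ge -(n-1)K g$: there exist $\chi>1$ (e.g.\ $\chi=n/(n-2)$ if $n\ge 3$) and a constant $C(n)$ such that for every $f\in C_c^\infty(B_\rho)$ with $\rho\le 2a$,
\[
\Bigl(\tfrac{1}{\mathrm{Vol}(B_\rho)}\int_{B_\rho} f^{2\chi}\Bigr)^{1/\chi}\;\le\;C(n)\,e^{C(n)\sqrt{K}a}\,\rho^{2}\,\tfrac{1}{\mathrm{Vol}(B_\rho)}\int_{B_\rho}|\nabla f|^{2}.
\]
Combining this with the Caccioppoli estimate applied to $f=\varphi v$ yields, for concentric balls $B_{r_1}\subset B_{r_2}\subset B_{2a}$, the reverse-H\"older inequality
\[
\Bigl(\tfrac{1}{\mathrm{Vol}(B_{r_1})}\int_{B_{r_1}} v^{2\chi}\Bigr)^{1/\chi}\;\le\;\frac{C(n)\,e^{C(n)\sqrt{K}a}}{(r_2-r_1)^{2}}\,\tfrac{1}{\mathrm{Vol}(B_{r_2})}\int_{B_{r_2}} v^{2}.
\]

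With this reverse-H\"older inequality in hand, the remainder is a standard Moser iteration: setting $p_k:=\chi^{k}$ and $r_k:=a(1+2^{-k})$, I would apply the estimate to $v=u^{p_k}$ on the annular pair $(B_{r_{k+1}},B_{r_k})$ and iterate; since $\sum k\chi^{-k}<\infty$, the product of the resulting constants is finite, and letting $k\to\infty$ yields
\[
\sup_{B_a} u^{2}\;\le\;C(n)\,e^{C(n)\sqrt{K}a}\,\frac{1}{\mathrm{Vol}(B_{2a})}\int_{B_{2a}} u^{2}.
\]
The main technical obstacle is the Sobolev step: producing an $L^{2\chi}$--$L^{2}$ Sobolev inequality on geodesic balls with the stated exponential dependence on $\sqrt{K}a$. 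This is exactly where the factor $\exp(C(n)\sqrt{K}R)$ of the statement originates, and it rests non-trivially on Bishop--Gromov volume comparison combined with the Poincar\'e-plus-doubling characterization of Sobolev inequalities of Saloff-Coste. Once this analytic input is available, the iteration itself is mechanical, and the $K=0$ case is recovered by letting the exponential prefactor collapse to unity.
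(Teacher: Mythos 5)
The paper does not prove this lemma: it is quoted verbatim as Theorem~6.2 of Chapter~II of Schoen--Yau \cite{SchoenYau1994} and used as a black box, so there is no ``paper's own proof'' to compare against. Your Moser-iteration sketch is essentially the standard proof of that cited result, and the overall strategy (subharmonicity of powers, Caccioppoli, local Sobolev, reverse H\"older, geometric iteration) is correct. A few places would need a line each before a referee signed off. First, the passage from $u$ subharmonic to $u^p$ subharmonic for $1\le p<2$ involves the singular factor $u^{p-2}$ where $u$ vanishes; the clean route is the general fact that $\phi\circ u$ is weakly subharmonic whenever $\phi$ is convex and nondecreasing, applied to $\phi(t)=t^p$ on $[0,\infty)$, rather than the pointwise Leibniz computation. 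Second, the volume-normalized local Sobolev inequality on geodesic balls under $\mathrm{Ric}\ge-(n-1)Kg$ is ordinarily stated with a zeroth-order correction,
\[
\Bigl(\tfrac{1}{\mathrm{Vol}(B_\rho)}\int_{B_\rho} f^{2\chi}\Bigr)^{1/\chi}\;\le\;C(n)\,e^{C(n)\sqrt{K}\rho}\,\rho^{2}\,\tfrac{1}{\mathrm{Vol}(B_\rho)}\int_{B_\rho}\bigl(|\nabla f|^{2}+\rho^{-2}f^{2}\bigr),
\]
not with the gradient term alone; this extra term is harmless here because Caccioppoli already produces a $(r_2-r_1)^{-2}\int v^2$ contribution, but as written your Sobolev step is slightly too strong a claim on a general manifold. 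Third, the iteration over $B_{r_k}$ with $r_k=a(1+2^{-k})$ introduces the ratios $\mathrm{Vol}(B_{r_k})/\mathrm{Vol}(B_{r_{k+1}})$ at each stage; their infinite product converges by Bishop--Gromov since $r_k/r_{k+1}\to1$ geometrically, but this must be said, as it is where the doubling hypothesis actually enters. None of these is a conceptual gap --- they are the usual bookkeeping of a Moser iteration and exactly the content of the Schoen--Yau proof being cited.
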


\subsection{Local energy estimates and Cacciopoli-type inequalities}
\label{sect:Cacciopoli}
Next, we state and prove a local energy inequality for the Laplacian of a biharmonic function.
\begin{lemma}
\label{lem:laplacian_estimate}
Let $(M,g)$ be an $n$–dimensional complete Riemannian manifold with a pole $p$ with $\mathrm{Ric}\ge 0$. Let $B_{\rho}\colon = B_{\rho}\left(p\right)$.
Fix $a>0$ and let $u\in C^\infty(B_{2a})$ be biharmonic, i.e.\ $\Delta^2 u=0$ on $B_{2a}$. Then there exists a constant $C=C(n)>0$ such that, for every pair of radii $0<\dfrac{1}{8}r<s<r\le 2a$,
\[
\int_{B_s}|\Delta u|^2\,dV
\leq \frac{C}{(r-s)^2}\int_{B_r}|\nabla u|^2\,dV
\;+\; \frac12\int_{B_r} |\Delta u|^2\,dV.
\]
\end{lemma}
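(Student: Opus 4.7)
The plan is to test the biharmonic equation $\Delta^2 u = 0$ against $\varphi u$, where $\varphi$ is the degenerate cutoff from Corollary~\ref{corollary:special_cutoff}, whose weighted estimates
\[
|\nabla\varphi|^2 \;\le\; \frac{C}{(r-s)^2}\,\varphi, \qquad \frac{|\Delta(\varphi^4)|^2}{\varphi^4} \;\le\; \frac{C}{(r-s)^4},
\]
together with the pointwise bound $|\Delta\varphi|\le C(r-s)^{-2}$ valid in the regime $\tfrac18 r<s<r$, are designed precisely for a cutoff-dependent Young's inequality. Applying Green's identity twice to $0 = \int \varphi u\,\Delta^2 u = \int \Delta(\varphi u)\,\Delta u$ produces the master identity
\[
\int \varphi\,(\Delta u)^2 \;=\; -2\int \nabla\varphi\cdot\nabla u\,\Delta u \;-\; \int u\,\Delta\varphi\,\Delta u,
\]
in which the first error term is routinely estimated: a weighted Young inequality using $|\nabla\varphi|^2/\varphi \le C(r-s)^{-2}$ gives, for any $\varepsilon>0$,
\[
\Bigl|2\int \nabla\varphi\cdot\nabla u\,\Delta u\Bigr| \;\le\; \varepsilon\int\varphi(\Delta u)^2 + \frac{C}{\varepsilon(r-s)^2}\int_{B_r}|\nabla u|^2,
\]
with the first summand absorbable into the left-hand side.

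The delicate step is the term $\int u\,\Delta\varphi\,\Delta u$, since a careless Young inequality produces $u^2$ factors that are not directly controlled by $|\nabla u|^2$ on $B_r$. The main trick---the ``cut-off dependent application of Young's inequality'' referred to in the introduction---is of the schematic form
\[
|u|\,|\Delta\varphi|\,|\Delta u| \;\le\; \tfrac{1}{4}(\Delta u)^2 \;+\; u^2\,(\Delta\varphi)^2,
\]
which produces an absorbable $\tfrac{1}{4}\int_{B_r}(\Delta u)^2$ contribution---to be tuned into the $\tfrac{1}{2}$ coefficient of the statement---together with a quadratic remainder $\int u^2(\Delta\varphi)^2$. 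To control this remainder, my plan is to exploit that $\Delta^2 u = 0$ leaves the identity invariant under $u\mapsto u-c$ for any constant $c$ (because $\int \varphi c\,\Delta^2 u = 0$); choosing $c$ to be an appropriate mean of $u$ on the annular support of $\Delta\varphi$ and combining the weighted bound $(\Delta\varphi)^2 \le C(r-s)^{-4}$ with a Poincar\'e-type inequality on that annulus reduces the quadratic error to a term of the form $C(r-s)^{-2}\int_{B_r}|\nabla u|^2$.

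Putting the two estimates together, tuning the Young exponents so that the residual coefficient in front of $\int_{B_r}(\Delta u)^2$ is exactly $\tfrac{1}{2}$, and using $\varphi \equiv 1$ on $B_s$, yields
\[
\int_{B_s}(\Delta u)^2 \;\le\; \int\varphi(\Delta u)^2 \;\le\; \tfrac{1}{2}\int_{B_r}(\Delta u)^2 + \frac{C}{(r-s)^2}\int_{B_r}|\nabla u|^2,
\]
as claimed. The hardest part of the argument is precisely the handling of the $u$-dependent term: producing an absorbable $\tfrac12$-coefficient in front of $\int_{B_r}(\Delta u)^2$ without accruing singular factors from the uncontrolled $u^2$ contribution is what forces both the use of Corollary~\ref{corollary:special_cutoff}'s degenerating cutoff (so that $(\Delta\varphi)^2$ is governed by a $\varphi$-dependent bound) and the constant-subtraction trick afforded by biharmonicity; the remaining steps---Green's identity and weighted Young---are comparatively routine.
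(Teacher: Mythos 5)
Your overall skeleton (test $\Delta^2 u=0$ against a cutoff times $u$, expand $\Delta(\varphi u)$, absorb the $\int\varphi(\Delta u)^2$ piece, leave behind a $\tfrac12\int(\Delta u)^2$ tail) is structurally sensible and the first error term $2\int\nabla\varphi\cdot\nabla u\,\Delta u$ is handled correctly with the weighted bound $|\nabla\varphi|^2/\varphi\le C(r-s)^{-2}$. However, the approach you take is genuinely different from the paper's, and the key step in your version does not close.

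The gap is in your treatment of $\int u\,\Delta\varphi\,\Delta u$. After the constant subtraction and (weighted) Young's inequality you are left with $\int (u-c)^2(\Delta\varphi)^2\,dV$, supported in the annulus $B_r\setminus B_s$, which you propose to control by a Poincar\'e inequality on that annulus. But the Poincar\'e constant of a thin annulus scales like $r^2$, not $(r-s)^2$: in $\mathbb{R}^2$, for example, taking $u=\sin\theta$ on $\{s<|x|<r\}$ gives $\int(u-\bar u)^2\sim r(r-s)$ while $\int|\nabla u|^2\sim (r-s)/r$, so the ratio is $\sim r^2$. Thus the best you can hope for from this step, even if an annular Poincar\'e inequality were available on a general manifold with a pole and $\mathrm{Ric}\ge 0$ (which is itself not something you can just invoke; Buser's theorem is for balls), is
\[
\int (u-c)^2(\Delta\varphi)^2\,dV \;\le\; \frac{C\,r^2}{(r-s)^4}\int_{B_r}|\nabla u|^2\,dV,
\]
which degenerates relative to the claimed $C(r-s)^{-2}$ whenever $r-s\ll r$ --- precisely the regime of close radii that the hole-filling iteration in Lemma~\ref{lemma:hole-filling} requires. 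So your argument does not prove the stated inequality, only a strictly weaker one with the wrong singular factor. You also slightly misread the paper's remark: the ``cut-off dependent Young's inequality'' it advertises belongs to Lemma~\ref{lem:Caccio-1-biharmonic-Br}, not to this lemma.

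The paper's own proof avoids $u$-terms entirely and never needs an annular Poincar\'e inequality. It sets $f=\Delta u$ (which is harmonic), invokes Bochner's identity with $\mathrm{Ric}\ge 0$ and the Cauchy--Schwarz inequality $|\nabla^2 u|^2\ge \tfrac1n f^2$ to get the pointwise bound $f^2\le \tfrac n2\Delta|\nabla u|^2 - n\langle\nabla u,\nabla f\rangle$, multiplies by $\varphi^2$, moves the Laplacian onto $\varphi^2$, and controls the mixed term via Young combined with the \emph{ordinary} Caccioppoli inequality for the harmonic function $f$, $\int\varphi^2|\nabla f|^2\le \tfrac{C}{(r-s)^2}\int f^2$, with the Young parameter chosen as $\eta\sim(r-s)^{-2}$ so that the $\int f^2$ coefficient becomes exactly $\tfrac12$. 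This route produces $|\nabla u|^2$ on the right-hand side directly from Bochner, so no constant subtraction or annular Poincar\'e is ever needed. If you want to salvage your approach you would need an entirely different device to convert $\int u^2(\Delta\varphi)^2$ into $\int|\nabla u|^2$ with the correct $(r-s)^{-2}$ weight, and I do not see how to do that from annular Poincar\'e alone.
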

\begin{proof}
    Set $f:=\Delta u$. Since $u$ is biharmonic on $B_{2a}$, $f$ is harmonic on $B_{2a}$ and $|f|$ is subharmonic. 
By Bochner’s formula under $\mathrm{Ric}\ge 0$, we have
\begin{equation*}\label{eq:bochner-K-general}
\frac12\,\Delta|\nabla u|^2
=|\nabla^2u|^2+\langle\nabla u,\nabla f\rangle+\mathrm{Ric}(\nabla u,\nabla u)
\;\ge\;\frac1n f^2+\langle\nabla u,\nabla f\rangle,
\end{equation*}
hence
\begin{equation}\label{eq:f2-ineq-general}
f^2 \;\le\; \frac n2\,\Delta|\nabla u|^2 \;-\; n\,\langle\nabla u,\nabla f\rangle.
\end{equation}

Fix $0<s<r\le 2a$ and choose a cutoff function $\varphi\in C_c^\infty(B_r)$ satisfying
\[
\varphi\equiv1 \text{ on }B_s,\quad 0\le\varphi\le1,\quad
|\nabla\varphi|\le \frac{C_0}{r-s}\varphi,\quad
|\Delta\varphi|\le \frac{C_0}{(r-s)^2}.
\]

Multiplying \eqref{eq:f2-ineq-general} by $\varphi^2$ and integrating over $B_r$ gives
\begin{align}
\int_{B_r}\varphi^2 f^2\,dV
&\le \frac n2\int_{B_r}\varphi^2\,\Delta|\nabla u|^2\,dV
      -n\int_{B_r}\varphi^2\langle\nabla u,\nabla f\rangle\,dV.
\label{eq:master-all}
\end{align}

We work on the two terms of the righthand side of the previous inequality.
We start with the term
\[
\int_{B_r}\varphi^2\,\Delta|\nabla u|^2\,dV.
\]
Integrating by parts twice yields
\begin{eqnarray*}
\int_{B_r}\varphi^2\,\Delta|\nabla u|^2\,dV
&=&\int_{B_r}|\nabla u|^2\,\Delta(\varphi^2)\,dV\\
&=&\int_{B_r}|\nabla u|^2\big(2\varphi\,\Delta\varphi+2|\nabla\varphi|^2\big)\,dV \nonumber\\
&\le& \frac{C_1}{(r-s)^2}\int_{B_r}|\nabla u|^2\,dV,
\label{eq:lap-all}
\end{eqnarray*}
where $C_1 = 2C_0+2C_0^2$.

\medskip
We now proceed with the term
\[
n\int_{B_r}\varphi^2\langle\nabla u,\nabla f\rangle\,dV.
\]
For any $\eta>0$, by Young’s inequality we have
\begin{align*}
\Big|n\int_{B(r)}\varphi^2\langle\nabla u,\nabla f\rangle\,dV\Big|
&\le \frac{n\eta}{2}\int_{B(r)}\varphi^2|\nabla u|^2\,dV
\;+\; \frac{n}{2\eta}\int_{B(r)}\varphi^2|\nabla f|^2\,dV.
\label{eq:young-all}
\end{align*}
Since $f$ is harmonic in $B_r$, testing the equation $\Delta f=0$ with $\varphi^2 f$ gives the standard Caccioppoli inequality:
\begin{equation}\label{eq:caccio-f-corrected}
\int_{B_r}\varphi^2|\nabla f|^2\,dV
\leq \frac{C_2}{(r-s)^2}\int_{B_r} f^2\,dV.
\end{equation}
Substituting \eqref{eq:caccio-f-corrected} into the previous inequality gives
\begin{equation}\label{eq:mixed-all}
\Big|n\int_{B_r}\varphi^2\langle\nabla u,\nabla f\rangle\,dV\Big|
\;\le\; \frac{n\eta}{2}\int_{B_r}\varphi^2|\nabla u|^2\,dV
\;+\; \frac{nC_2}{2\eta(r-s)^2}\int_{B_r} f^2\,dV.
\end{equation}

\medskip
Combining \eqref{eq:master-all}, \eqref{eq:caccio-f-corrected} and \eqref{eq:mixed-all} yields
\[
\int_{B_r}\varphi^2 f^2\,dV
\;\le\; \left(\frac{C_1}{(r-s)^2}+\frac{n\eta}{2}\right)\int_{B_r}|\nabla u|^2\,dV
\;+\; \frac{nC_2}{2\eta(r-s)^2}\int_{B_r} f^2\,dV.
\]
Choose $\eta=\dfrac{nC_2}{(r-s)^2}$ so that the coefficient in front of $\int_{B_r} f^2$ equals $\tfrac12$. Then
\[
\int_{B_r}\varphi^2 f^2\,dV
\leq \frac{C_3}{(r-s)^2}\int_{B_r}|\nabla u|^2\,dV
\;+\; \frac12\int_{B_r} f^2\,dV,
\]
where $C_3=C_1 +\dfrac{n^2 C_2}{2}$.
Since $\varphi\equiv1$ on $B_s$, the left-hand side equals $\int_{B_s} f^2\,dV$. Then
\[
\int_{B_s}|\Delta u|^2\,dV
\leq \frac{C}{(r-s)^2}\int_{B_r}|\nabla u|^2\,dV
\;+\; \frac12\int_{B_r} |\Delta u|^2\,dV,
\]
which is what we wanted to prove.

\end{proof}

The next lemma is a Caccioppoli-type inequality, and it holds for harmonic functions.
It comes as a bit of a surprise that it also holds for biharmonic functions.
\begin{lemma}
\label{lem:Caccio-1-biharmonic-Br}
Let $(M,g)$ be a smooth Riemannian manifold with a pole $p\in M$, 
such that $\mbox{Ric}\geq 0$.
Let $0<\dfrac{1}{8}r<s<r<\infty$
and let $B_\rho:=B_\rho(p)$. If $u\in C^\infty(B_{2r})$ is \emph{biharmonic}, i.e.\ $\Delta^2 u=0$ in $B_{2r}$, then there exists a constant $C=C(n)>0$ such that
\begin{equation}\label{eq:Caccio-int-Br}
\int_{B_s} |\nabla u|^2\,dV \;\le\; \frac{C}{(r-s)^2}\int_{B_r} u^2\,dV.
\end{equation}
\end{lemma}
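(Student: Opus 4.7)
The plan is to combine a Caccioppoli bound for $\Delta u$ with an integration-by-parts identity for $|\nabla u|^{2}$, tied together by a cut-off dependent use of Young's inequality, and then to iterate with the hole-filling Lemma~\ref{lemma:hole-filling}. Throughout I would work with the cutoff $\varphi\in C_{c}^{\infty}(B_{r})$ supplied by Corollary~\ref{corollary:special_cutoff}. The crucial feature of $\varphi$ is that the two pointwise inequalities
\[
\frac{|\nabla\varphi^{4}|^{2}}{\varphi^{4}}\le \frac{C}{(r-s)^{2}}
\quad\text{and}\quad
\frac{|\Delta\varphi^{4}|^{2}}{\varphi^{4}}\le \frac{C}{(r-s)^{4}}
\]
both hold, so that after Young's inequality every weight appearing in front of $u^{2}$ or $|\nabla u|^{2}$ is bounded.

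The first step is a Caccioppoli-type inequality for $\Delta u$. Testing $\Delta^{2}u=0$ against the admissible function $\varphi^{4}u$ and integrating by parts twice (no boundary terms, since $\varphi$ is compactly supported) yields
\[
\int\varphi^{4}(\Delta u)^{2}\,dV
\;=\;-\int u\,\Delta(\varphi^{4})\,\Delta u\,dV
\;-\;2\int \langle\nabla u,\nabla\varphi^{4}\rangle\,\Delta u\,dV.
\]
Factoring the two integrands as $\bigl(|u|\,|\Delta\varphi^{4}|/\varphi^{2}\bigr)\bigl(\varphi^{2}|\Delta u|\bigr)$ and $\bigl(|\nabla u|\,|\nabla\varphi^{4}|/\varphi^{2}\bigr)\bigl(\varphi^{2}|\Delta u|\bigr)$, applying Young's inequality with a small fixed constant, invoking the cutoff bounds above, and absorbing the resulting $\tfrac{1}{2}\int\varphi^{4}(\Delta u)^{2}$ into the left-hand side gives
\[
\int\varphi^{4}(\Delta u)^{2}\,dV \;\le\; \frac{C}{(r-s)^{4}}\int_{B_{r}} u^{2}\,dV + \frac{C}{(r-s)^{2}}\int_{B_{r}}|\nabla u|^{2}\,dV. \qquad(A)
\]

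Next, integration by parts once produces
\[
\int \varphi^{4}|\nabla u|^{2}\,dV\;=\;-\int u\varphi^{4}\Delta u\,dV\;-\;\int u\,\langle\nabla\varphi^{4},\nabla u\rangle\,dV.
\]
The gradient cross-term is absorbed into $\tfrac12\int\varphi^{4}|\nabla u|^{2}$ at the cost of $C(r-s)^{-2}\int_{B_{r}}u^{2}$, using the first cutoff bound. For the remaining term I would apply Young's inequality with a \emph{free} parameter $\beta>0$,
\[
\Big|\int u\varphi^{4}\Delta u\,dV\Big|\;\le\;\beta\int\varphi^{4}(\Delta u)^{2}\,dV+\frac{1}{4\beta}\int_{B_{r}}u^{2}\,dV,
\]
substitute $(A)$, and then choose $\beta=(r-s)^{2}/(8C)$ so that the coefficient multiplying $\int_{B_{r}}|\nabla u|^{2}$ becomes exactly $\tfrac14$. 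Since $\varphi\equiv 1$ on $B_{s}$ this yields
\[
\int_{B_{s}}|\nabla u|^{2}\,dV\;\le\;\frac{C'}{(r-s)^{2}}\int_{B_{r}}u^{2}\,dV\;+\;\frac{1}{4}\int_{B_{r}}|\nabla u|^{2}\,dV.
\]
To finish, I would invoke Lemma~\ref{lemma:hole-filling} with $F(\rho)=\int_{B_{\rho}}|\nabla u|^{2}$, $G(\rho)=\int_{B_{\rho}}u^{2}$, $\alpha=2$ and $\theta=\tfrac14$; $F$ and $G$ are finite on compact sets of radii by smoothness of $u$, and $G$ is clearly nondecreasing. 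The constraint $s>r/8$ ensures that every iterated radius $r_{k}$ produced in the proof of Lemma~\ref{lemma:hole-filling} satisfies $r_{k}>r_{k+1}/8$, so the pre-estimate is valid at each step.

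The main obstacle is precisely the cut-off dependent choice of $\beta$. A constant $\beta$ would leave a factor of order $(r-s)^{-2}$ in front of $\int_{B_{r}}|\nabla u|^{2}$, which cannot be absorbed by hole-filling. Taking $\beta$ proportional to $(r-s)^{2}$ reduces that coefficient to $\tfrac14$, but at the price of a $\beta^{-1}\asymp (r-s)^{-2}$ weight in front of $\int u^{2}$, which is tolerable. This delicate balance is the reason why the Caccioppoli estimate $(A)$ must carry only $(r-s)^{-2}$ — and not a larger negative power — in front of $\int|\nabla u|^{2}$, and it is the tailored bound $|\nabla\varphi^{4}|^{2}/\varphi^{4}\le C(r-s)^{-2}$ from Corollary~\ref{corollary:special_cutoff} that makes the whole scheme possible.
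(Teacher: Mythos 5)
Your proof is correct, but it takes a genuinely different route from the paper's at the crucial absorption step, so the comparison is worth spelling out. Both arguments ultimately test $\Delta^2 u=0$ against (a multiple of) $\varphi^4 u$ and both hinge on the tailored cutoff bounds $|\nabla\varphi|^2\le C\varphi(r-s)^{-2}$ and $|\Delta(\varphi^4)|^2/\varphi^4\le C(r-s)^{-4}$ from Corollary~\ref{corollary:special_cutoff}. The divergence is in how the leftover gradient term is handled. You first prove the intermediate estimate
\[
\int\varphi^{4}(\Delta u)^{2}\,dV \;\le\; \frac{C}{(r-s)^{4}}\int_{B_{r}} u^{2}\,dV + \frac{C}{(r-s)^{2}}\int_{B_{r}}|\nabla u|^{2}\,dV,
\]
in which the $\varphi$-weight on the gradient term has been dropped (indeed the natural weight $\varphi^{3}$ is of the wrong sign relative to $\varphi^{4}$ for direct absorption). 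Consequently, after substituting this into the $|\nabla u|^{2}$ identity and choosing $\beta\asymp(r-s)^{2}$, you land on $F(s)\le\tfrac14 F(r)+C(r-s)^{-2}G(r)$ with $F$ unweighted, and you close the loop with the hole-filling Lemma~\ref{lemma:hole-filling}. The paper avoids hole-filling entirely by applying Young's inequality with a \emph{spatially varying} parameter $\varepsilon=\varphi(r-s)/(2L)$, so that $\psi=\varepsilon\varphi\propto\varphi^{2}$; the resulting gradient term then carries a $\varphi^{3}\le\varphi^{2}$ weight and is absorbed directly into $\int\varphi^{2}|\nabla u|^{2}$. Your version is arguably cleaner conceptually (it separates the $\Delta u$ Caccioppoli estimate $(A)$ from the $\nabla u$ estimate), at the cost of invoking the iteration lemma; the paper's is more compressed but requires the somewhat delicate observation that the Young parameter may be taken to depend on the cutoff. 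You also correctly note the one subtlety in invoking Lemma~\ref{lemma:hole-filling}: the pre-estimate is only available for $s>r/8$, and the iterated radii $r_{k}$ all satisfy $r_{k}>r_{k+1}/8$ once $r_{0}=s>r/8$, so the proof of that lemma goes through even though its stated hypothesis (validity for all $0<s<r\le R$) is not literally met.
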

\begin{proof}
Let $\varphi\in C^{\infty}_{c}\left(B_r\right)$ be a cutoff function 
as in Corollary \ref{corollary:special_cutoff}.
    By integration by parts on $B_r$,
\begin{align*}
\int_{B_r} \varphi^2\,|\nabla u|^2\,dV
&= -\int_{B_r} \varphi^2\,u\,\Delta u\,dV
   - 2\int_{B_r} \varphi\,u\,\langle\nabla\varphi,\nabla u\rangle\,dV.
\label{eq:1st-energy-identity}
\end{align*}
Applying Young’s inequality to the last term, for any $\eta>0$,
\[
\left|2\int_{B_r} \varphi\,u\,\langle\nabla\varphi,\nabla u\rangle\,dV\right|
\le \eta \int_{B_r} \varphi^2 |\nabla u|^2\,dV
   + \frac{C}{\eta (r-s)^2}\int_{B_r} u^2\,dV,
\]
where we used $|\nabla\varphi|\le C/(r-s)$.
Choosing $\eta=\tfrac12$ yields,
\begin{equation}\label{eq:prelim}
\int_{B_r} \varphi^2|\nabla u|^2\,dV
\;\le\;
2\int_{B_r} \varphi^2|u\,\Delta u|\,dV
+\frac{C}{(r-s)^2}\int_{B_r} u^2\,dV.
\end{equation}

\medskip
Our purpose now is to estimate the first term on the righthand side of \eqref{eq:prelim}:
\[
2\int_{B_r} \varphi^2|u\,\Delta u|\,dV.
\]
To do so, requires some preliminary estimates. First of all,
by Young's inequality
\[
2\int_{B_r} \varphi^2|u\,\Delta u|\,dV
\le \int_{B_r}\varepsilon^2 \varphi^2|\Delta u|^2\,dV
  + \int_{B_r} \frac{1}{\varepsilon^2}\varphi^2 u^2\,dV.
\]
We must remark here that \emph{$\varepsilon$ is a function which will depend on the cutoff $\varphi$}, not a constant
as usual: this will
be reflected in the choice we shall make later. To simplify
the writing, we let
\[
\psi \colon = \varepsilon \varphi.
\]
We first deal with the term 
\[
\int_{B_r} \psi^2|\Delta u|^2\,dV.
\]
Since $\Delta^2 u=0$ in $B_r$, for the test function $\varphi^2 u$ we have
\[
0=\int_{B_r} \Delta^2 u\,(\psi^2 u)\,dV
=\int_{B_r} \Delta u\,\Delta(\psi^2 u)\,dV.
\]
Expanding
\[
\Delta(\psi^2 u)=\psi^2\Delta u+4\psi\langle\nabla\psi,\nabla u\rangle
+(\Delta(\psi^2))\,u,
\]
we obtain the identity
\begin{equation}\label{eq:D2-identity}
\int_{B_r} \psi^2\,|\Delta u|^2\,dV
= -4\int_{B_r} \psi\,\Delta u\,\langle\nabla\psi,\nabla u\rangle\,dV
  -\int_{B_r} \Delta u\,u\,\Delta(\psi^2)\,dV.
\end{equation}

\medskip
Let us estimate the first term on the righthand side of \eqref{eq:D2-identity}.
Using Young’s inequality with $a=\psi|\Delta u|$ and $b=2|\nabla\psi||\nabla u|$,
\begin{align*}
4\left|\int_{B_r} \psi\,\Delta u\,\langle\nabla\psi,\nabla u\rangle\,dV\right|
&\le \frac14\int_{B_r} \psi^2|\Delta u|^2\,dV
      +16\int_{B_r} |\nabla\psi|^2|\nabla u|^2\,dV
\nonumber\\
\end{align*}

\medskip
Next, we work with the second term of \eqref{eq:D2-identity}.
Using Young's inequality with a parameter $\eta>0$,
\begin{align*}
\left|\int_{B_r} \Delta u\,u\,\Delta(\psi^2)\,dV\right|
&= \left|\int_{B_r} (\psi\Delta u)\,\frac{\Delta(\psi^2)}{\psi}\,u\,dV\right|
\nonumber\\
&\le \frac{\eta}{2}\int_{B_r} \psi^2|\Delta u|^2\,dV
 +\frac{1}{2\eta}\int_{B_r}\left(\frac{\Delta(\psi^2)}{\psi}\right)^2 u^2\,dV.
\label{eq:delta-phi2-est}
\end{align*}

\medskip
Inserting the two previous estimates into \eqref{eq:D2-identity}
\begin{eqnarray*}
\int_{B_r} \psi^2|\Delta u|^2\,dV
&\le&
\left(\frac14+\frac{\eta}{2}\right)\int_{B_r} \psi^2|\Delta u|^2\,dV
      +16\int_{B_r} |\nabla\psi|^2|\nabla u|^2\,dV\\
      &&
      +\frac{1}{2\eta}\int_{B_r}\left(\frac{\Delta(\psi^2)}{\psi}\right)^2 u^2\,dV.
\end{eqnarray*}
Taking $\eta=\frac{1}{2}$ and absorbing the term $\int_{B_r} \psi^2|\Delta u|^2\,dV$
to the left yields
\begin{equation}\label{eq:D2-final}
\int_{B_r} \psi^2|\Delta u|^2\,dV
\;\le\;
      32\int_{B_r} |\nabla\psi|^2|\nabla u|^2\,dV
      +2\int_{B_r}\left(\frac{\Delta(\psi^2)}{\psi}\right)^2 u^2\,dV.
\end{equation}
\medskip
Therefore, using \eqref{eq:D2-final}, we get the following estimate for
the first term on the righthand side of \eqref{eq:prelim}:
\begin{eqnarray*}
2\int_{B_r} \varphi^2|u\,\Delta u|\,dV&\leq & 32\int_{B_r} |\nabla\psi|^2|\nabla u|^2\,dV
      +2\int_{B_r}\left(\frac{\Delta(\psi^2)}{\psi}\right)^2 u^2\,dV\\
&&+
\int_{B_r} \frac{1}{\varepsilon^2}\varphi^2 u^2\,dV.
\end{eqnarray*}
Using this estimate in \eqref{eq:prelim} and collecting terms yields
\begin{eqnarray*}
\int_{B_r} \varphi^2|\nabla u|^2\,dV
&\le& 
32\int_{B_r} |\nabla\psi|^2|\nabla u|^2\,dV
      +2\int_{B_r}\left(\frac{\Delta(\psi^2)}{\psi}\right)^2 u^2\,dV\\
 &&  +
\int_{B_r} \frac{1}{\varepsilon^2}\varphi^2 u^2\,dV
+\frac{C}{(r-s)^2}\int_{B_r} u^2\,dV.
\end{eqnarray*}

Choose $\varepsilon=\dfrac{\varphi(r-s)}{2L}$ (where $L>0$ is to be chosen later).
Then we can estimate
\begin{eqnarray*}
    \int_{B_r} |\nabla\psi|^2|\nabla u|^2\,dV&=&\frac{\left(r-s\right)^2}{L^2}\int_{B_r}
    \varphi^2\left|\nabla \varphi\right|^2|\nabla u|^2\,dV\\
    &\leq&
    \frac{\left(r-s\right)^2}{L^2}\frac{C}{\left(r-s\right)^2}\int_{B_r}\varphi^3 |\nabla u|^2\,dV \quad \mbox{(by Corollary \ref{corollary:special_cutoff})}\\
    &\leq&
    \frac{C}{L^2}\int_{B_r}\varphi^2 |\nabla u|^2\,dV,
\end{eqnarray*}
where in the last inequality we used the properties of
$\varphi$ established in Corollary \ref{corollary:special_cutoff}. Hence
\[
 \int_{B_r} |\nabla\psi|^2|\nabla u|^2\,dV \leq \frac{C}{L^2}\int_{B_r}\varphi^2 |\nabla u|^2\,dV.
\]
Thus, if we choose $L$ such that $C/L^2=\frac{1}{64}$, we obtain
\begin{eqnarray*}
\int_{B_r} \varphi^2|\nabla u|^2\,dV
&\le& 
\frac{1}{2}\int_{B_r} \varphi^2|\nabla u|^2\,dV
      +2\int_{B_r}\left(\frac{\Delta(\psi^2)}{\psi}\right)^2 u^2\,dV\\
 &&  +
\frac{C}{16\left(r-s\right)^2}\int_{B_r}  u^2\,dV
+\frac{C}{(r-s)^2}\int_{B_r} u^2\,dV.
\end{eqnarray*}
Next, we estimate the term $\int_{B_r}\left(\frac{\Delta(\psi^2)}{\psi}\right)^2 u^2\,dV$.
\begin{eqnarray*}
    \int_{B_r}\left(\frac{\Delta(\psi^2)}{\psi}\right)^2 u^2\,dV&=& \frac{\left(r-s\right)^2}{4L^2}
    \int_{B_r} \left(\frac{\Delta(\varphi^4)}{\varphi^2}\right)^2 u^2\,dV\\
    &\leq& \frac{\left(r-s\right)^2}{4L^2} \int_{B_r}\frac{C}{\left(r-s\right)^4} u^2\,dV
    \quad \mbox{(by Corollary \ref{corollary:special_cutoff})}\\
    &\leq& \frac{1}{256\left(r-s\right)^2}\int_{B_r} u^2\,dV.
\end{eqnarray*}
Therefore
\[
\int_{B_r} \varphi^2|\nabla u|^2\,dV\leq 
\frac{1}{2}\int_{B_r} \varphi^2|\nabla u|^2\,dV
+
 \frac{C'}{\left(r-s\right)^2}\int_{B_r} u^2\,dV.
\]
Moving the term
$\tfrac12\int \varphi^2|\nabla u|^2$ to the left yields
\[
\frac12\int_{B_r} \varphi^2|\nabla u|^2\,dV
\;\le\; \frac{C''}{(r-s)^2}\int_{B_r} u^2\,dV,
\]
and therefore
\begin{equation*}\label{eq:phi-grad-final}
\int_{B_r} \varphi^2|\nabla u|^2\,dV
\;\le\; \frac{C'''}{(r-s)^2}\int_{B_r} u^2\,dV.
\end{equation*}

\medskip
Since $\varphi\equiv 1$ on $B_s$, this implies
\[
\int_{B_s} |\nabla u|^2\,dV
\;\le\; \frac{C}{(r-s)^2}\int_{B_r} u^2\,dV,
\]
which is \eqref{eq:Caccio-int-Br}.
\end{proof}

\subsection{Proof of Theorem \ref{thm:main}}
\label{sect:proof_main_thm}
With all the previous preparation the proof of our main result is not difficult. 
Indeed, by Lemma \ref{lem:laplacian_estimate} and the Hole-filling Lemma
(Lemma \ref{lemma:hole-filling}), we have
that for every $0<\dfrac{1}{8}r<s<r<\infty$ 
\[
\int_{B_s}|\Delta u|^2\,dV
\leq \frac{C}{(r-s)^2}\int_{B_r}|\nabla u|^2\,dV.
\]
Thus, 
taking $s=\frac{1}{2}r$
\[
\int_{B_{\frac{1}{2}r}}|\Delta u|^2\,dV
\leq \frac{C'}{r^2}\int_{B_r}|\nabla u|^2\,dV,
\]
and then, from Lemma \ref{lem:Caccio-1-biharmonic-Br} we have the following Caccioppoli-type inequality 
for biharmonic functions:
\begin{equation}
\label{ineq:Cacciopoli_laplacian}
\int_{B_{\frac{1}{2}r}}|\Delta u|^2\,dV
\leq \frac{C''}{r^4}\int_{B_{2r}}u^2\,dV.
\end{equation}
By the volume doubling property (which holds if $\mbox{Ric}\geq 0$), we have
\begin{eqnarray*}
\frac{1}{\mbox{Vol}\left(B_{\frac{1}{2}r}\right)}\int_{B_{\frac{1}{2}r}}|\Delta u|^2\,dV
&\leq& \frac{C'''}{r^4}\frac{1}{\mbox{Vol}\left(B_{2r}\right)}\int_{B_{2r}}u^2\,dV,
\end{eqnarray*}
and hence, since $\Delta u$ is harmonic, by the mean value inequality (Lemma \ref{lemma:mean_value_ineq}),
\begin{eqnarray*}
\sup_{p\in B_{\frac{1}{4}r}}\left|\Delta u\right|^2 &\leq& 
\frac{C''''}{r^4}\frac{1}{\mbox{Vol}\left(B_{2r}\right)}\int_{B_{2r}}u^2\,dV\\
&\leq& \frac{C^{(5)}}{r^4}\sup_{x\in B_r\left(p\right)}\left|u\left(x\right)\right|^2
=\frac{1}{r^4}o\left(r^4\right),
\end{eqnarray*}
and hence if $u$ is of subquadratic growth, letting $r\rightarrow \infty$, it 
follows that
\[
\Delta u \equiv 0,
\]
i.e., $u$ is harmonic, which is what we wanted to prove.

\end{document}